\pgfplotsset{compat=1.12}
\pgfplotsset{compat=1.17}
\newtheorem{theorem}{Theorem}
\newtheorem{lemma}[theorem]{Lemma}
\newenvironment{proof}[1][Proof]{\noindent\textbf{#1.} }{\ \rule{0.5em}{0.5em}}
           \edef\temp{\noexpand\pgfkeyssetvalue{/pgfplots/table/@cell content}{%
            \noexpand\cellcolor{gray}{n/a}%
          }%
        }%
        \let\value=\pgfmathresult%
        \edef\temp{\noexpand\pgfkeyssetvalue{/pgfplots/table/@cell content}{%
            \noexpand\cellcolor[rgb]{\pgfmathresult}%
            \noexpand\definecolor{mapped color}{rgb}{\pgfmathresult}%
            \ifx\textcolorvalue\empty\else\noexpand\color{\textcolorvalue}\fi%
            \the\toks0%
          }%
        }%
\begin{document}

\title{Convergence Analysis of an Endemic Time Delay Model Using Dirac and Radon Measures}
%\author{Tin Nwe Aye\\tinnweaye211@gmail.com
%\and Linus Carlsson\\linus.carlsson@mdu.se}
\author[1]{Tin Nwe Aye}
\author[2]{Linus Carlsson}

\affil[1]{Department of Mathematics, Linn\'{e}universitetet, Växjö, Sweden}
\affil[2]{Division of Applied Mathematics, Mälardalen University, Box 883, 721 23, Västerås, Sweden}
\maketitle

\begin{abstract}
This article explores the convergence properties of an $SLIR^\text{T}R^\text{P}D$ endemic model, incorporating Dirac and Radon measures, alongside distributed delays to represent latency and temporary immunity. A class of delays is defined for both continuous and discrete endemic models using continuous integral kernels with compact support and discrete terms expressed through Dirac and Radon measures. Numerical results show that the continuous model can be approximated by a discrete lag endemic model. Furthermore, the simulation time for the numerical solution is significantly shorter than that for the exact solution.
\end{abstract}

\section{Introduction}
In the study of disease spread, simple models frequently employ systems of differential equations (see, for example, \cite{castillo1996mathematical,dwyer1997host,tilahun2020stochastic,trejos2022dynamics}). To better capture the dynamics of infection flow within populations, delay differential models have also been explored extensively (e.g., \cite{guglielmi2022delay,wei2008delayed,zhang2008analysis,zhu2020dynamics,arino2006time,al2016time,misra2013mathematical,disease2024}).

In this study, we investigate the solutions of retarded differential equations. Specifically, we analyze the $SLIR^\text{T}R^\text{P}D$ endemic model with distributed delays as presented in \cite{TL2024}, focusing on its convergence properties. The model incorporates a probability kernel density function with compact support on the positive real axis, satisfying the condition
\begin{equation*}
\int_{\mathbb{R_{+}}}\Phi(\rho)d\rho=1.
\end{equation*}
By considering the Dirac measure, we analyze the convergence of numerical solutions of the system with discrete delays to the exact solution of the $SLIR^\text{T}R^\text{P}D$ model, which is formulated as a retarded differential equation. For distributed time delays, we specifically address the latency period and temporary immunity as integrated within the $SLIR^\text{T}R^\text{P}D$ model through delay differential equations.
Several research articles have explored similar realistic phenomenas,
incorporating continuous or discrete time delays for both discrete and
continuous models \cite{guglielmi2022delay, zhu2020dynamics,wei2008delayed, zhang2008analysis, beretta1995global}.

As an extension of the $SLIR^\text{T}R^\text{P}D$ model, we incorporate generalized Radon measures into the framework. Control at infinity of the positive Radon measure enables the relaxation of the assumption of compact support for the probability kernel density function, allowing us to handle non-compactly supported kernel densities. In both cases, models with continuous integral kernel functions and discrete kernel functions are introduced to analyze the convergence properties of the numerical solution of the discrete model toward the exact solution of the continuous model. This study demonstrates that the numerical approximations with discrete kernel functions converge to their continuous counterparts, ensuring consistency between the discrete and continuous formulations.

The structure of this paper is as follows: In Chapter~\ref{sec:modelWithDirac}
, we prove the convergence of a family of numerical approximations of the $SLIR^\text{T}R^\text{P}D$ model. The main result is presented in Chapter~\ref{Sec:GeneralMeasureKernel}, where we extend the $SLIR^\text{T}R^\text{P}D$ model to include Radon measure kernels and prove that a family of discrete lag models converges to this extended version. Chapter~\ref{sec:numericalExample} provides a numerical example to demonstrate the convergence. Finally, we conclude the paper with a discussion and suggest potential future research directions.

\section{Modeling Endemic Dynamics with Continuous Delays}\label{sec:modelWithDirac}
The $SLIR^\text{T}R^\text{P}D$ endemic model is formulated by using delay differential equations as a compartmental model for comprehending the propagation of a disease within an unstructured population.  The endemic continuous time delay model is derived as
\begin{align}\label{model:mainSystemOfEquation}
\frac{dS}{dt} &  =-\beta(t)I(t)S(t)+p\gamma\int_{\mathbb{R_{+}}}I(t-\rho
)\Phi(\rho)d\lambda(\rho),\nonumber\\
\frac{dL}{dt} &  =\beta(t)I(t)S(t)-\int_{\mathbb{R_{+}}}\beta(t-\tau
)I(t-\tau)S(t-\tau)\Psi(\tau)d\lambda(\tau),\nonumber\\
\frac{dI}{dt} &  =\int_{\mathbb{R_{+}}}\beta(t-\tau)I(t-\tau)S(t-\tau
)\Psi(\tau)d\lambda(\tau)-\gamma I(t)-\mu I(t),\\
\frac{dR^{\text{T}}}{dt} &  =p\gamma I(t)-p\gamma\int_{\mathbb{R_{+}}}%
I(t-\rho)\Phi(\rho)d\lambda(\rho),\nonumber\\
\frac{dR^{\text{P}}}{dt} &  =(1-p)\gamma I(t),\nonumber\\
\frac{dD}{dt} &  =\mu I(t).\nonumber
\end{align}

In the article \cite{TL2024}, we provided an in-depth analysis of the dynamics models with distributed delays, specifically focusing on their structures. In this paper, we will concentrate on demonstrating how System \ref{model:mainSystemOfEquation} can be approximated by System \ref{model:discrete} as the number of summation terms increases, i.e. we prove that discrete delay solutions converge to the continuous delay solution in supremum norm. The endemic discrete time delay model is defined as
\begin{align}
\frac{dS_{j}}{dt} &  =-\beta(t)I_{j}(t)S_{j}(t)+p\gamma\sum_{i=1}^{j}%
\omega_{i}^{j}I_{j}(t-\rho_{i}^{j}),\nonumber\\
\frac{dL_{j}}{dt} &  =\beta(t)I_{j}(t)S_{j}(t)-\sum_{i=1}^{j}\varpi_{i}%
^{j}\beta(t-\tau_{i}^{j})I_{j}(t-\tau_{i}^{j})S_{j}(t-\tau_{i}^{j}),\nonumber\\
\frac{dI_{j}}{dt} &  =\sum_{i=1}^{j}\varpi_{i}^{j}\beta(t-\tau_{i}^{j}%
)I_{j}(t-\tau_{i}^{j})S_{j}(t-\tau_{i}^{j})-\gamma I_{j}(t)-\mu I_{j}%
(t),\label{model:discrete}\\
\frac{dR_{j}^{\text{T}}}{dt} &  =p\gamma I_{j}(t)-p\gamma\sum_{i=1}^{j}%
\omega_{i}^{j}I_{j}(t-\rho_{i}^{j}),\nonumber\\
\frac{dR_{j}^{\text{P}}}{dt} &  =(1-p)\gamma I_{j}(t),\nonumber\\
\frac{dD_{j}}{dt} &  =\mu I_{j}(t).\nonumber
\end{align}
\subsection{Convergence analysis for delay differential equations}\label{sec:convergence}
System
\ref{model:mainSystemOfEquation} is assumed to satisfy that the contact rate
$\beta$ is a non-negative smooth function on $\mathbb{R}$, the history data for the susceptible and infected individuals are
constant, i.e.,%
\begin{align}
S(s)  &  =c_{S}>0,\text{ for all }s\leq0,\label{eq:HS}\\
I(s)  &  =c_{I}>0,\text{ for all }s\leq0, \label{eq:HI}%
\end{align}
and finally that the initial data for the latency, temporary recovered, and permanent recovered individuals are%
\begin{align}
L(0)  &  =\beta_{0}c_{I}c_{S}\int_{\theta}^{L}\Psi(\tau)\tau d\tau
\text{,}\label{eq:Es}\\
R^{\text{T}}(0)  &  =c_{I}p\gamma\int_{\sigma}^{M}\Phi(\rho)\rho
d\rho\text{,}\label{eq:Rs}\\
R^{\text{P}}(0)  &  =(1-p)\gamma c_{I}\int_{\theta}^{L}\Psi(\tau)\tau
d\tau\text{.} \label{eq:RPs}%
\end{align}
as derived in
\cite{TL2024} where
for some $0<\theta<L<\infty$ and $0<\sigma<M<\infty$, $\Psi(\tau)=0$ and $\Phi(\rho)=0$  
if $\tau\notin\lbrack\theta,L]$ and $\rho\notin\lbrack\sigma,M]$,
i.e.  $\operatorname*{supp}(\Psi)\subset\lbrack\theta,L]$ and $\operatorname*{supp}(\Phi)\subset\lbrack\sigma,M].$
 
In System \ref{model:discrete}, the history data are assumed to satisfy equations~\eqref{eq:Es}--\eqref{eq:RPs}, with $\sum_{i=1}^{j}\omega_{i}^j\rho_i^j$ and $\sum_{i=1}^{j}\varpi_{i}^j\tau_i^j$ in place of $\int_{\sigma}^{M}\Phi(\rho)\rho
d\rho$ and $\int_{\theta}^{L}\Psi(\tau)\tau
d\tau$.
 
Theorem 6 in article \cite{TL2024}, proves that these history/initial data gives 
solutions that are $L-$increasingly smooth, in particular, the solution is
continuous on $[0,\infty)$ and differentiable on $(0,\infty)$.  

To be able to show convergence, we chose the discrete delays in System~\ref{model:discrete} we use the time steps
\begin{equation}\label{eq:division}
d_{i}^{j}=\sigma+\frac{i(M-\sigma)}{j}
\end{equation}
where $i=0,1,2,...,j$, and $j>1$.

Additionally, since we will apply \emph{the first mean value theorem for definite integrals}, we select any
\begin{equation}\label{eq:Def_rho_i}
\rho_{i}^{j}\in\lbrack d_{i-1}^{j},d_{i}^{j}],i=1,2,...,j
\end{equation}
and define
\begin{equation}
\omega_{i}^{j}=\int_{d_{i-1}^{j}}^{d_{i}^{j}}\Phi(\rho) d\lambda(\rho)\label{eq:weighted}%
\end{equation}
An analogous construction is made for $\{\omega_{i}^{j}\}$  and $\{\varpi_{i}^{j}\}$. \newline
Finally, let ${d\nu_{j}(\rho)=\sum_{i=1}^{j}\omega
_{i}^{j}\delta_{\rho_{i}^{j}}(\rho)}$, where $\delta_{\rho_{i}^{j}}$ is the Dirac
measure centered in $\rho_{i}^{j}$. We use this discretization to be able to transist between the continuous integral setting and the discrete setting of sums of delays. In what follows, we will denote the continuous functions as $C(\mathbb{R})$. 

\begin{lemma}
\label{lem:approach} Let $H$ be uniformly continuous function and let $\Phi$ be a probability density function with $\mathrm{supp}(\Phi)\subset([\sigma,M])$ then%
\[
\int_{\mathbb{R_{+}}}H(\rho)d\nu_{j}(\rho)=\sum_{i=1}^{j}H(\rho_{i}^{j})\omega_{i}^{j}\underset{j\rightarrow\infty
}{\longrightarrow}\int_{\mathbb{R_{+}}}H(\rho)\Phi(\rho)d\lambda(\rho)
\]
In measure theoritical arguments, we prove that $d\nu_{j}\underset
{j\rightarrow\infty}{\longrightarrow}\Phi d\lambda$ on $C(\mathbb{R}_{+}).$
\end{lemma}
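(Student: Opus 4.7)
The plan is a Riemann-sum argument that exploits both the compact support of $\Phi$ and the particular choice of the weights $\omega_i^j$ as exact subinterval masses of $\Phi\,d\lambda$. The key observation is that, by construction, $\omega_i^j$ is precisely the quantity produced by the first mean value theorem for definite integrals when applied to $H\Phi$ on $[d_{i-1}^j, d_i^j]$.

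First I would split the right-hand integral along the partition \eqref{eq:division}. Since $\operatorname{supp}(\Phi)\subset[\sigma,M]$ and the points $\{d_i^j\}$ partition $[\sigma,M]$ into $j$ equal pieces,
\[
\int_{\mathbb{R}_+} H(\rho)\Phi(\rho)\,d\lambda(\rho) \;=\; \sum_{i=1}^{j}\int_{d_{i-1}^j}^{d_i^j} H(\rho)\Phi(\rho)\,d\lambda(\rho).
\]
On each subinterval $\Phi\ge 0$ and $H$ is continuous, so the first mean value theorem for definite integrals supplies some $\xi_i^j\in[d_{i-1}^j, d_i^j]$ with
\[
\int_{d_{i-1}^j}^{d_i^j} H(\rho)\Phi(\rho)\,d\lambda(\rho) \;=\; H(\xi_i^j)\int_{d_{i-1}^j}^{d_i^j}\Phi(\rho)\,d\lambda(\rho) \;=\; H(\xi_i^j)\,\omega_i^j,
\]
where the last equality is exactly the definition \eqref{eq:weighted} of $\omega_i^j$.

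Next I would compare $\sum_i H(\xi_i^j)\omega_i^j$ with the target sum $\sum_i H(\rho_i^j)\omega_i^j$. Both $\xi_i^j$ and the chosen $\rho_i^j$ from \eqref{eq:Def_rho_i} lie in the same subinterval of length $(M-\sigma)/j$. By uniform continuity of $H$, for any $\varepsilon>0$ there exists $\delta>0$ with $|H(x)-H(y)|<\varepsilon$ whenever $|x-y|<\delta$. Choosing $j$ so large that $(M-\sigma)/j<\delta$ gives
\[
\left|\int_{\mathbb{R}_+} H(\rho)\Phi(\rho)\,d\lambda(\rho) - \sum_{i=1}^{j} H(\rho_i^j)\omega_i^j\right| \;\le\; \sum_{i=1}^{j}|H(\xi_i^j)-H(\rho_i^j)|\,\omega_i^j \;\le\; \varepsilon \sum_{i=1}^{j}\omega_i^j \;=\; \varepsilon,
\]
using that $\Phi$ is a probability density supported in $[\sigma,M]$, so $\sum_{i=1}^j \omega_i^j = \int_\sigma^M \Phi\,d\lambda = 1$. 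This proves the claimed convergence.

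For the closing remark that $d\nu_j\to \Phi\,d\lambda$ on $C(\mathbb{R}_+)$, note that every $d\nu_j$ and the limit measure $\Phi\,d\lambda$ are supported in the common compact set $[\sigma,M]$. Thus any $H\in C(\mathbb{R}_+)$ restricted to $[\sigma,M]$ is uniformly continuous, and the estimate above applies verbatim, giving weak convergence of the discrete measures to $\Phi\,d\lambda$. The main obstacle is essentially organizational: one must recognize that the weights $\omega_i^j$ were \emph{defined} to match the output of the mean value theorem, which collapses what might otherwise be a delicate quadrature error estimate into a single uniform-continuity bound; no deeper machinery is required.
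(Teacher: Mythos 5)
Your proof is correct and follows essentially the same route as the paper: both arguments reduce the difference between the sum and the integral to a uniform-continuity estimate over the subintervals of the partition \eqref{eq:division}, using that the weights $\omega_i^j$ are the exact subinterval masses of $\Phi\,d\lambda$ and sum to $1$. The only cosmetic difference is that you pass through the first mean value theorem to produce intermediate points $\xi_i^j$, whereas the paper keeps the error $\sigma_i^j(\rho)=H(\rho_i^j)-H(\rho)$ under the integral sign; the resulting bound $\varepsilon\sum_i\omega_i^j=\varepsilon$ is identical.
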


\begin{proof}
Fix any given $\varepsilon>0$. By using Equation \eqref{eq:weighted}, we have%
\begin{equation}\label{eq:eqH}
H(\rho_{i}^{j})\omega_{i}^{j}=H(\rho_{i}^{j})\int_{d_{i-1}^{j}}^{d_{i}^{j}%
}\Phi d\lambda=\int_{d_{i-1}^{j}}^{d_{i}^{j}}H(\rho_{i}^{j})\Phi d\lambda
\end{equation}

Fix $N>0$ large enough, so that for any $j>N$, the intervals
$I_{i}^{j}=[d_{i-1}^{j},d_{i}^{j}]$ are so small that the
uniformly continuous function $H$ satisfies
\begin{equation} \label{eq:epsilonError}
|H(\rho)-H(\rho_{i}^{j})|<\varepsilon
\end{equation}

or in other words%
\begin{equation}\label{eq:uniconFun}
H(\rho_{i}^{j})=H(\rho)+\sigma_{i}^{j}(\rho)
\end{equation}
where $|\sigma_{i}^{j}(\rho)|<\varepsilon$. \newline Note that since $\Phi$
is non-negative, we have%
\[
\left\vert \int_{d_{i-1}^{j}}^{d_{i}^{j}}\sigma_{i}^{j}(\rho)\Phi
(\rho)d\lambda(\rho)\right\vert \leq\varepsilon\int_{d_{i-1}^{j}}^{d_{i}^{j}%
}\Phi(\rho)d\lambda(\rho)
\]
Substituting Equation \eqref{eq:uniconFun} into Equation \eqref{eq:eqH}, we get%
\begin{align*}
H(\rho_{i}^{j})\omega_{i}^{j}  & =\int_{d_{i-1}^{j}}^{d_{i}^{j}}\left(
H(\rho)+\sigma_{i}^{j}(\rho)\right)  \Phi d\lambda(\rho)\\
& =\int_{d_{i-1}^{j}}^{d_{i}^{j}}H(\rho)\Phi(\rho)d\lambda(\rho)+\int
_{d_{i-1}^{j}}^{d_{i}^{j}}\sigma_{i}^{j}(\rho)\Phi(\rho)d\lambda(\rho)
\end{align*}
\newline Summing over $i$, gives%
\[
\sum_{i=1}^{j}H(\rho_{i}^{j})\omega_{i}^{j}=\sum_{i=1}^{j}\int_{d_{0}^{j}%
}^{d_{j}^{j}}H(\rho)\Phi(\rho)d\lambda(\rho)+\sum_{i=1}^{j}\int_{d_{i-1}^{j}%
}^{d_{i}^{j}}\sigma_{i}^{j}(\rho)\Phi(\rho)d\lambda(\rho)
\]
Again note that%
\begin{align*}
\left\vert \sum_{i=1}^{j}\int_{d_{i-1}^{j}}^{d_{i}^{j}}\sigma_{i}^{j}%
(\rho)\Phi(\rho)d\lambda(\rho)\right\vert  &  \leq\sum_{i=1}^{j}\left\vert
\int_{d_{i-1}^{j}}^{d_{i}^{j}}\sigma_{i}^{j}(\rho)\Phi(\rho)d\lambda
(\rho)\right\vert \\
&  \leq\sum_{i=1}^{j}\int_{d_{i-1}^{j}}^{d_{i}^{j}}\left\vert \sigma_{i}%
^{j}(\rho)\right\vert \Phi(\rho)d\lambda(\rho)\\
&  \leq\sum_{i=1}^{j}\int_{d_{i-1}^{j}}^{d_{i}^{j}}\varepsilon\Phi
(\rho)d\lambda(\rho)\\
&  \leq\varepsilon\sum_{i=1}^{j}\int_{d_{i-1}^{j}}^{d_{i}^{j}}\Phi
(\rho)d\lambda(\rho)\\
&  =\varepsilon\int_{\sigma}^{M}\Phi(\rho)d\lambda(\rho)\\
&  =\varepsilon
\end{align*}
We get
\begin{equation}\label{eq:epsilonLemma}
\left\vert \int_{\sigma}^{M}H(\rho)\Phi(\rho)d\lambda(\rho)
- \sum_{i=1}^{j}H(\rho_{i}^{j})\omega_{i}^{j}\right\vert \le \varepsilon
\end{equation}
And since $\varepsilon>0$ was arbitrary, it follows
\begin{equation}
\sum_{i=1}^{j}H(\rho_{i}^{j})\omega_{i}^{j}\underset{j\rightarrow\infty
}{\longrightarrow}\sum_{i=1}^{j}\int_{d_{0}^{j}}^{d_{j}^{j}}H(\rho)\Phi
(\rho)d\lambda(\rho)=\int_{\sigma}^{M}H(\rho)\Phi(\rho)d\lambda
(\rho)\label{eq:ApproForLarge}%
\end{equation}
Trivial concepts of integration theory helps us rewrite the left-hand side as%
\[
\sum_{i=1}^{j}H(\rho_{i}^{j})\omega_{i}^{j}=\sum_{i=1}^{j}\int_{\mathbb{R_{+}%
}} H(\rho)\omega_{i}^{j}\delta_{\rho_{i}^{j}}=\int_{\mathbb{R_{+}}}H(\rho
)\sum_{i=1}^{j}\omega_{i}^{j}\delta_{\rho_{i}^{j}}=\int_{\mathbb{R_{+}}}%
H(\rho)d\nu_{j}(\rho)
\]
Which finally shows that%
\[
\int_{\mathbb{R_{+}}}H(\rho)d\nu_{j}(\rho)\underset{j\rightarrow\infty
}{\longrightarrow}\int_{\mathbb{R_{+}}}H(\rho)\Phi(\rho)d\lambda(\rho)
\]

\end{proof}

\begin{theorem}\label{the:convergenceDirac}
Let $\{S,L,I,R^{\text{T}},R^{\text{P}}\}$ be the solution of System
\ref{model:mainSystemOfEquation}. The contact rate is assumed to be a
non-negative, smooth function and the remaining parameters are assumed to be
non-negative constants. Then, for any $T>0$, the solution to System \ref{model:discrete} with
$\{\omega_{i}^{j}\},\{\rho_{i}^{j}\},\{\varpi_{i}^{j}\},$ and $\{\tau_{i}%
^{j}\}$ converges in the supremum norm on $[0,T]$ to the solution of System \ref{model:mainSystemOfEquation}
as $j$ tends to infinity.
\end{theorem}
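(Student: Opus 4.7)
My plan is to recast both systems as Volterra integral equations by integrating from $0$ to $t$, form the componentwise differences, and close a Gronwall-type inequality for the running suprema. Because both systems share the same constant history on $(-\infty,0]$, and because the discrete initial values \eqref{eq:Es}--\eqref{eq:RPs} differ from the continuous ones only through $\int\Phi(\rho)\rho\,d\rho-\sum\omega_i^j\rho_i^j$ and the analogous expression for $\Psi$, a direct application of Lemma \ref{lem:approach} with $H(\rho)=\rho$ (which is uniformly continuous on the compact supports) shows that the errors in the initial data vanish as $j\to\infty$.

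The central estimate concerns the delay contribution to the $I$-equation (the $S$- and $R^{\text{T}}$-equations being treated identically, while the $L,R^{\text{P}},D$-equations then follow by direct integration and feed-through):
\[
\Delta(t):=\int_{\mathbb{R}_{+}}F(t,\tau)\Psi(\tau)d\lambda(\tau)-\sum_{i=1}^{j}\varpi_{i}^{j}F_{j}(t,\tau_{i}^{j}),\qquad F(t,\tau):=\beta(t-\tau)I(t-\tau)S(t-\tau),
\]
with $F_{j}$ defined analogously through $I_{j},S_{j}$. I split $\Delta=\Delta^{(a)}+\Delta^{(b)}$, where $\Delta^{(a)}$ replaces the integral by the sum while keeping $F$ (quadrature error) and $\Delta^{(b)}=\sum\varpi_{i}^{j}(F(t,\tau_{i}^{j})-F_{j}(t,\tau_{i}^{j}))$ (propagation error). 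For $\Delta^{(b)}$, the elementary identity $AB-A'B'=A(B-B')+(A-A')B'$, together with the uniform boundedness of $\beta,I,S,I_{j},S_{j}$ on $[-M,T]$ (a consequence of conservation of the total population, which a direct sum of the six right-hand sides shows is preserved by both systems) and with $\sum_{i}\varpi_{i}^{j}=1$, gives $|\Delta^{(b)}(t)|\le C(e_{I}(t)+e_{S}(t))$, where $e_{X}(t):=\sup_{0\le s\le t}|X(s)-X_{j}(s)|$, using that $e_{X}\equiv 0$ on $(-\infty,0]$.

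The main obstacle is to make the quadrature bound on $\Delta^{(a)}$ uniform in $t$, which is not stated in Lemma \ref{lem:approach} as such but follows from inspecting its proof. The map $(t,\tau)\mapsto F(t,\tau)$ is continuous on the compact rectangle $[0,T]\times[\theta,L]$ and hence uniformly continuous, so a single modulus of continuity governs the key estimate \eqref{eq:epsilonError} for every $t\in[0,T]$ simultaneously. This yields a sequence $\alpha_{j}\to 0$ with $|\Delta^{(a)}(t)|\le\alpha_{j}$ for all $t\in[0,T]$, and the same argument treats the $\Phi$-kernel delay term in the $S$- and $R^{\text{T}}$-equations with $G(t,\rho)=I(t-\rho)$. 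Integrating the difference equations from $0$ to $t$, collecting all six compartments, and using that the non-delay algebraic terms are Lipschitz in the bounded solution values gives
\[
e(t):=\sum_{X}e_{X}(t)\le\tilde\alpha_{j}+\tilde C\int_{0}^{t}e(s)\,ds,\qquad\tilde\alpha_{j}\to 0.
\]
Gronwall's inequality then yields $e(T)\le\tilde\alpha_{j}e^{\tilde C T}\to 0$, which is precisely the asserted supremum-norm convergence on $[0,T]$.
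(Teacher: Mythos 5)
Your proposal is correct, but it closes the estimate by a genuinely different mechanism than the paper. The paper also starts from an integral (integrating-factor) representation of $I_j-I$ and $S_j-S$ and also splits the delay error into a pure quadrature part and a propagation part (the terms $g_1^j,g_2^j$, resp.\ $g_3^j,g_4^j$), but it then proceeds by a \emph{method of steps}: finite induction over the intervals $[k\theta,(k+1)\theta]$, using that every delayed argument $t-\tau$ with $\tau\ge\theta$ falls either into the constant history or into an interval already controlled at the previous step, and tracking an explicit chain of constants $c_1,\dots,c_9$ and thresholds $N_\varepsilon^1,\dots,N_\varepsilon^m$. You instead run a single global Gronwall inequality for the running suprema $e_X(t)=\sup_{0\le s\le t}|X(s)-X_j(s)|$, which is legitimate precisely because the delayed arguments never exceed $t$, so the propagation error at time $s$ is dominated by $e(s)$. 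Your route buys a cleaner closure (the constant bookkeeping collapses into $e^{\tilde C T}$), does not actually need the strictly positive minimal delay $\theta$ that the paper's induction is built on, and makes explicit two points the paper leaves implicit: the uniformity in $t$ of the quadrature bound (via uniform continuity of $(t,\tau)\mapsto F(t,\tau)$ on a compact rectangle, which is indeed what Lemma \ref{lem:approach}'s proof delivers) and the treatment of the perturbed initial data for $L,R^{\text{T}},R^{\text{P}}$ via Lemma \ref{lem:approach} with $H(\rho)=\rho$. The one soft spot you share with the paper is the uniform-in-$j$ boundedness of $S_j,I_j$ on $[0,T]$, which the constants $C,\tilde C$ require; your appeal to conservation of the total population gives this only once non-negativity of the discrete solutions is known, so a sentence establishing (or citing) positivity of System \ref{model:discrete} would make the argument fully self-contained — the paper simply asserts this boundedness.
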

\begin{proof}
Let $T>0$ and $\varepsilon>0$ be given. \\
From Theorem 6 in \cite{TL2024}, we get that 
${S,I}\in C(\mathbb{R})$ and $I,R^{\text{T}},R^{\text{P}}\in C(\mathbb{R_+}).$
By Heine-Cantor theorem,  the functions 
${S,L,I,R^{\text{T}},R^{\text{P}}}$ are uniformly continuous on 
the compact sets $[-\max\{M,L\},T]$ and $[0,T]$ respectively. 
Fix any $0\leq t\leq T$. We want to
prove that $|I_{j}(t)-I(t)|<\varepsilon$ for $j>N_{\varepsilon}=\max\{N_\varepsilon^1,\cdots,N_\varepsilon^m\}$ where $m$ is the smallest integer such that $m\theta>T$ (remember that $\textrm{supp}(\psi)\subset[\theta,L]$). By using
System \ref{model:mainSystemOfEquation} and System \ref{model:discrete} we
get
\begin{align*}
\frac{d}{dt}\left(  I_{j}-I\right)  (t) &  =\left(  \sum_{i=1}^{j}\omega
_{i}^{j}\beta(t-\tau_{i}^{j})I_{j}(t-\tau_{i}^{j})S_{j}(t-\tau_{i}^{j})-\gamma
I_{j}(t)-\mu I_{j}(t)\right)  \\
&  -\int_{\theta}^L\beta(t-\tau)I(t-\tau)S(t-\tau)\Psi(\tau)d\tau-\gamma
I(t)-\mu I(t)\\
&  =\sum_{i=1}^{j}\omega_{i}^{j}\beta(t-\tau_{i}^{j})I_{j}(t-\tau_{i}%
^{j})S_{j}(t-\tau_{i}^{j})\\
&\quad -\int_{\theta}^L\beta(t-\tau)I(t-\tau
)S(t-\tau)\Psi(\tau)d\tau\\
&\quad  -(\gamma+\mu)\left(  I_{j}(t)-I(t)\right)
\end{align*}
Simplying,
\begin{align*}
\frac{d}{dt}\left(  I_{j}-I\right)  +(\gamma+\mu)\left(  I_{j}-I\right)
&=\sum_{i=1}^{j}\omega_{i}^{j}\beta(t-\tau_{i}^{j})I_{j}(t-\tau_{i}^{j}%
)S_{j}(t-\tau_{i}^{j})\\
&\quad -\int_{\theta}^L\beta(t-\tau
)I(t-\tau)S(t-\tau)\Psi(\tau)d\tau
\end{align*}
\newline 
By multiplying with the integrating factor for both sides, we get%
\begin{align*}
&\frac{d}{dt}\left(  \left(  I_{j}-I\right)  e^{t(\gamma+\mu)}\right)   \\
&=e^{t(\gamma+\mu)}\cdot\\
&\quad  \left(  \underbrace{\sum_{i=1}^{j}\omega_{i}^{j}\beta(r-\tau_{i}^{j}%
)I_{j}(r-\tau_{i}^{j})S_{j}(r-\tau_{i}^{j})-\int_{\theta}^{L}\beta
(r-\tau)I(r-\tau)S(r-\tau)\Psi(\tau)d\tau}_{=D_{j}(r,\tau)}\right)
\end{align*}
By integrating both side, and remembering that $I(0)=I_j(0)$, we get%

\begin{equation}\label{eq:converg_I}
I_{j}-I   =e^{-t\left(  \gamma+\mu\right)  }
\int_{0}^{t}   D_{j}(r,\tau) e^{r\left(\gamma+\mu\right)  }  dr
\end{equation}

%In article \cite{TL2024}, we proved that the solution functions of System \ref{model:mainSystemOfEquation}
%are $r$-\emph{increasingly smooth} which means that $S(t)$ and $I(t)$ are continuous functions on $\mathbb{R_{+}}$.
Since $\mathrm{supp}(\Psi)=[\theta,L]$, we only consider the functions $S(t)$,  $I(t)$ and $\beta(t)$ on the closed and bounded interval, $[t-L,t-\theta]$,  in the above equation. %As a result,  $S$,  $I$ and $\beta$ are continuous on the closed and bounded interval.  
%By Heine-Cantor theorem,  the functions $S$,  $I$ and $\beta$ are uniformly continuous. 

To simplify the notations below, we set $H(\tau)=\beta(r-\tau)I(r-\tau)S(r-\tau)$ and 
$H_j(\tau^j_i)=\beta(r-\tau^j_i)I_j(r-\tau^j_i)S_j(r-\tau^j_i)$.

To prove that $|I(t)-I_j(t)|<\varepsilon$ for all $0 \le t \le T$ we will divide the interval up into subintervals as follows.
%for any continuous function $H$. Fix $r\in\lbrack0,t]$. Now, with
On the first time interval we will prove that $|I(t)-I_j(t)|<c_1\cdot \varepsilon$  for all $j>N_\varepsilon^1$.  
\newline i)
\textbf{Convergence on }$[0,\theta]$\textbf{ of }$I$: 

 For any $0\le t \le \theta$, we prove that there is an $N_\varepsilon^1$ 
such that $|I(t)-I_j(t)|<c_1\cdot \varepsilon$  for all $j>N_\varepsilon^1$.

In section \ref{sec:convergence},  we mention that the history data 
of $S(s)$ and $I(s)$ are constant for all $s\leq0$. That make $H(\tau)$ and $H_j(\tau^j_i)$ to be the same according to the assumption of the constant history data since $r-\tau\leq 0$ for all $0\le r \le t \le \theta$.  With that setting,  we can use
the Equation \eqref{eq:ApproForLarge} in Lemma \ref{lem:approach}
with $\Psi$ in place of $\Phi$ and $\tau$ in place of $\rho$, that (remember, from above $H=H_j$)
\[
\sum_{i=1}^{j}H(\tau_{i}^{j})\omega_{i}^{j}\underset{j\rightarrow\infty
}{\longrightarrow}\int_{\theta}^LH(\tau)\Psi(\tau)d\lambda(\tau)
\]

It follows that  there exists an $N_\varepsilon^1>0$ such that
$|D_{j}(r,\tau)|<\varepsilon$ if $j>N_{\varepsilon}^1$ for all $0\le r \le \theta$, due to the uniform continuity.

As a consequence, we get
\begin{align*}
|I_{j}(t)-I(t)| &  <e^{-t\left(  \gamma+\mu\right)  }\int_{0}^{t}e^{r\left(
\gamma+\mu\right)  }\varepsilon dr\\
&  =\frac{\varepsilon}{(\gamma+\mu)}e^{-t\left(  \gamma+\mu\right)  }\left(
e^{t\left(  \gamma+\mu\right)  }-1\right)  \\
&  <c_1\varepsilon
\end{align*}
for $0\le t \le \theta$.
\newline ii)
\textbf{Convergence on }$[0,\theta]$\textbf{ of }$S$:
 
For $0\le t \le \theta$,  we will prove that $\left\vert S_{j}(t)-S\right(t)\vert <\varepsilon$ for
any $\varepsilon>0$ and $j>N_{\varepsilon}^1.$%
\begin{align*}
\frac{d}{dt}\left(  S_{j}-S\right)   &  =\left(  -\beta(t)I_{j}%
(t)S_{j}(t)+p\gamma\sum_{i=1}^{j}\omega_{i}^{j}I_{j}(t-\rho_{i}^{j})\right)
\\
&  -\left(  -\beta(t)I(t)S(t)+p\gamma\int_{\sigma}^MI(t-\rho
)\Phi(\rho)d\rho\right)
\end{align*}
Simplifying%
\begin{align*}
\frac{d}{dt}\left(  S_{j}-S\right)   &  =p\gamma\left(  \sum_{i=1}%
^{j}\omega_{i}^{j}I_{j}(t-\rho_{i}^{j})-\int_{\sigma}^M I(t-\rho
)\Phi(\rho)d\rho\right)  \\
&  -\beta(t)\left(  I_{j}(t)S_{j}(t)-I(t)S(t)\right)  \\
&  =p\gamma\left(  \sum_{i=1}^{j}\omega_{i}^{j}I_{j}(t-\rho_{i}^{j}%
)-\int_{\sigma}^MI(t-\rho
)\Phi(\rho)d\rho\right)  \\
&  -\beta(t)\left(  I_{j}(t)S_{j}(t)
\underbrace{-I_{j}(t)S(t)+I_{j}(t)S(t)}_{\textrm{add zero}}%
-I(t)S(t)\right)  \\
&  =p\gamma\left(  \sum_{i=1}^{j}\omega_{i}^{j}I_{j}(t-\rho_{i}^{j}%
)-\int_{\sigma}^MI(t-\rho
)\Phi(\rho)d\rho\right)  \\
&  -\beta(t)I_{j}(t)\left(  S_{j}(t)-S(t)\right)  -\beta(t)S(t)\left(
I_{j}(t)-I(t)\right)
\end{align*}
By re-arranging the terms, we obtain
\begin{align*}
\frac{d}{dt}\left(  S_{j}-S\right)  +\beta I_{j}\left(  S_{j}%
-S\right)   &  =p\gamma\left(  \sum_{i=1}^{j}\omega_{i}^{j}I_{j}%
(t-\rho_{i}^{j})-\int_{\sigma}^MI(t-\rho
)\Phi(\rho)d\rho\right)  \\
&  -\beta S\left(  I_{j}-I\right)
\end{align*}
\newline 
By multiplying both sides of the equation by the integrating factors, we get%
\begin{align*}
\frac{d}{dt}\left(  \left(  S_{j}-S\right)  \exp(
%TCIMACRO{\dint \limits_{0}^{t}}%
%BeginExpansion
{\displaystyle\int\limits_{0}^{t}}
%EndExpansion
\beta(s)I_j(s)ds)\right)   &  =p\gamma\exp(%
{\displaystyle\int\limits_{0}^{t}}
%EndExpansion
\beta(s)I_j(s)ds)\cdot\\
&\quad \left(  \sum_{i=1}^{j}\omega_{i}^{j}I_{j}(t-\rho_{i}%
^{j})-\int_{\sigma}^MI(t-\rho
)\Phi(\rho)d\rho\right)  \\
&\quad  -\exp(%
%TCIMACRO{\dint \limits_{0}^{t}}%
%BeginExpansion
{\displaystyle\int\limits_{0}^{t}}
%EndExpansion
\beta(s)I_j(s)ds)\cdot\beta S\left(  I_{j}-I\right)
\end{align*}
which solves to%
\begin{align}\label{eq:converg_S}
S_{j}-S&  =p\gamma\exp(-%
%TCIMACRO{\dint \limits_{0}^{t}}%
%BeginExpansion
{\displaystyle\int\limits_{0}^{t}}
%EndExpansion
\beta(s)I_j(s)ds)\cdot\nonumber\\
&
%TCIMACRO{\dint \limits_{0}^{t}}%
%BeginExpansion
{\displaystyle\int\limits_{0}^{t}}
%EndExpansion
\exp(%
%TCIMACRO{\dint \limits_{0}^{r}}%
%BeginExpansion
{\displaystyle\int\limits_{0}^{r}}
%EndExpansion
\beta(s)I_j(s)ds)\cdot\left(  \underbrace{\sum_{i=1}^{j}\omega_{i}^{j}I_{j}(r-\rho_{i}%
^{j})-\int_{\sigma}^MI(r-\rho
)\Phi(\rho)d\rho}_{=D_{j}(r,\rho)}\right)  dr\nonumber\\
&  -\exp(-%
%TCIMACRO{\dint \limits_{0}^{t}}%
%BeginExpansion
{\displaystyle\int\limits_{0}^{t}}
%EndExpansion
\beta(s)I_j(s)ds)\cdot%
%TCIMACRO{\dint \limits_{0}^{t}}%
%BeginExpansion
{\displaystyle\int\limits_{0}^{t}}
%EndExpansion
\exp(%
%TCIMACRO{\dint \limits_{0}^{r}}%
%BeginExpansion
{\displaystyle\int\limits_{0}^{r}}
%EndExpansion
\beta(s)I_j(s)ds)\cdot\beta S\left(  \underbrace{I_{j}-I}_{<c_1\varepsilon}\right)  dr
\end{align}
By using the fact $\vert I_{j}(t)-I(t)\vert<c_1\varepsilon$ for $0\leq t\leq \theta$ if $j>N_\varepsilon^1$ and using 
Equation~\eqref{eq:ApproForLarge}, 
in Lemma \ref{lem:approach} proves that
$\vert D_{j}(r,\rho)\vert<\varepsilon$ for $0\leq r\leq t\leq \theta$. This holds because 
$r-\rho\leq t-\rho\leq \theta-\rho\leq \theta $. 
Note
that $%
%TCIMACRO{\dint \limits_{0}^{r}}%
%BeginExpansion
{\displaystyle\int\limits_{0}^{r}}
%EndExpansion
\beta(s)I_j(s)ds$ is uniformly bounded since $\beta$ and  $I$ are bounded and continuous function. Therefore it
follows that
\begin{align*}
\vert S_{j}-S\vert&  <p\gamma\varepsilon \exp(-%
%TCIMACRO{\dint \limits_{0}^{t}}%
%BeginExpansion
{\displaystyle\int\limits_{0}^{t}}
%EndExpansion
\beta(s)I_j(s)ds)\cdot
%TCIMACRO{\dint \limits_{0}^{t}}%
%BeginExpansion
{\displaystyle\int\limits_{0}^{t}}
%EndExpansion
\exp(%
%TCIMACRO{\dint \limits_{0}^{r}}%
%BeginExpansion
{\displaystyle\int\limits_{0}^{r}}
%EndExpansion
\beta(s)I_j(s)ds) dr\\
&  -c_1\varepsilon \exp(-%
%TCIMACRO{\dint \limits_{0}^{t}}%
%BeginExpansion
{\displaystyle\int\limits_{0}^{t}}
%EndExpansion
\beta(s)I_j(s)ds)\cdot%
%TCIMACRO{\dint \limits_{0}^{t}}%
%BeginExpansion
{\displaystyle\int\limits_{0}^{t}}
%EndExpansion
\exp(%
%TCIMACRO{\dint \limits_{0}^{r}}%
%BeginExpansion
{\displaystyle\int\limits_{0}^{r}}
%EndExpansion
\beta(s)I_j(s)ds)\cdot\beta S dr\\
&=\varepsilon \exp(-%
{\displaystyle\int\limits_{0}^{t}}
\beta(s)I_j(s)ds)\cdot\\%
&\quad \left(p\gamma{\displaystyle\int\limits_{0}^{t}}\exp(
{\displaystyle\int\limits_{0}^{r}}
%EndExpansion
\beta(s)I_j(s)ds) dr -c_1{\displaystyle\int\limits_{0}^{t}}\exp(
{\displaystyle\int\limits_{0}^{r}}
%EndExpansion
\beta(s)I_j(s)ds) \cdot\beta(r)S(r)dr \right)\\
&<c_2\varepsilon 
\end{align*}
for $0\leq t\leq \theta$. 
\newline iii)
\textbf{Convergence on }$[\theta,2\theta]$\textbf{ of }$I$: 

 For $\theta\le t \le 2\theta$,  we prove that there is an $N_\varepsilon^2$ 
such that $|I(t)-I_j(t)|<c_3\cdot \varepsilon$  for all $j>N_\varepsilon^2$. 

From Equation \eqref{eq:converg_I},  we have
\begin{align*}
&D_{j}(r,\tau)\\
&=\sum_{i=1}^{j}\omega_{i}^{j}\beta(r-\tau_{i}^{j}%
)I_{j}(r-\tau_{i}^{j})S_{j}(r-\tau_{i}^{j})-\int_{\theta}^L\beta
(r-\tau)I(r-\tau)S(r-\tau)\Psi(\tau)d\tau\\
&=\underbrace{\sum_{i=1}^{j_1}\omega_{i}^{j}\beta(r-\tau_{i}^{j}%
)I_{j}(r-\tau_{i}^{j})S_{j}(r-\tau_{i}^{j})-\int_{\theta}^{2\theta}\beta
(r-\tau)I(r-\tau)S(r-\tau)\Psi(\tau)d\tau}_{g_1^j(r,\tau)}\\
&+\underbrace{\sum_{i=j_1+1}^{j}\omega_{i}^{j}\beta(r-\tau_{i}^{j}%
)I_{j}(r-\tau_{i}^{j})S_{j}(r-\tau_{i}^{j})-\int_{2\theta}^{L}\beta
(r-\tau)I(r-\tau)S(r-\tau)\Psi(\tau)d\tau}_{g_2^j(r,\tau)}
\end{align*}
In the above summation term,  we chose $j_1<j$ such that the interval, divided into two parts, satisfies  ${d_{j_1}^{j}<2\theta}$
 and ${d_{j_1+1}^j\ge2\theta}$.

By using the Equation \eqref{eq:ApproForLarge} in Lemma \ref{lem:approach}, we have $|g_2^j(r,\tau)|<\varepsilon$ for $0\leq r\leq t$ and $2\theta\leq \tau\leq L$ if $j>N_\varepsilon^2$
since $S(r-\tau)$ and $I(r-\tau)$ are constant for $r-\tau\leq 0$.

By using $\vert I_{j}(t)-I(t)\vert<c_1\varepsilon $ and $\vert S_{j}(t)-S(t)\vert<c_2\varepsilon $ for $0\leq t \leq \theta$ if $j>N_\varepsilon^1$,  
we can say that $\vert I_{j}(r-\tau)-I(r-\tau)\vert<c_1\varepsilon $ and $\vert S_{j}(r-\tau)-S(r-\tau)\vert<c_2\varepsilon $ since $0\leq r-\tau\leq \theta$.
Then,  
\begin{align*}
\vert \beta S_{j}I_{j}-\beta SI\vert&=\beta\vert S_{j}I_{j}-SI\vert\\
&=\beta\vert S_{j}I_{j}-S_{j}I+S_{j_1}I-SI\vert\\
&\leq \beta(\vert S_{j}I_{j}-S_{j}I\vert+\vert S_{j}I-SI\vert)\\
&= \beta (S_{j}\vert I_{j}-I\vert+I \vert S_{j}-S\vert)\\
&\leq \beta \varepsilon (c_1S_{j}+ c_2I)
\end{align*}
Since $S_{j_1}$ and $I$ are bounded functions, we have
$$\vert \beta S_{j}I_{j}-\beta SI\vert<c_3\varepsilon$$

So, we can use  the Equation \eqref{eq:ApproForLarge} in Lemma \ref{lem:approach} since $\vert \beta S_{j}I_{j}-\beta SI\vert<c_3\varepsilon$. As a result, we have $\vert g_1^j(r,\tau)\vert<c_4\varepsilon$ if $j>N_\varepsilon^2$.

Hence, we know that $\vert D_{j}(r,\tau)\vert<c_5\varepsilon$ if $j>\max\{N_\varepsilon^1,N_\varepsilon^2\}$. 
As a consequence, we get from Equation \eqref{eq:converg_I}
\begin{align*}
|I_{j}(t)-I(t)| &  <e^{-t\left(  \gamma+\mu\right)  }\int_{0}^{t}e^{r\left(
\gamma+\mu\right)  }c_5\varepsilon dr\\
&  =\frac{c_5\varepsilon}{(\gamma+\mu)}e^{-t\left(  \gamma+\mu\right)  }\left(
e^{t\left(  \gamma+\mu\right)  }-1\right)  \\
&  <c_6\varepsilon
\end{align*}
for $\theta\le t \le 2\theta$.
\newline iv)
\textbf{Convergence on }$[\theta,2\theta]$\textbf{ of }$S$: 

By using that $|I_{j}(t)-I(t)| <c_6\varepsilon$, for $\theta\le t \le 2\theta$, we get 
$\vert D_{j}(r,\rho)\vert<\varepsilon$ if $j>\max\{N_\varepsilon^1,N_\varepsilon^2\}$ according to the Equation \eqref{eq:ApproForLarge} in Lemma \ref{lem:approach}.

From Equation \eqref{eq:converg_S}, we get
\begin{align*}
\vert S_{j}-S\vert&  <p\gamma\varepsilon \exp(-%
%TCIMACRO{\dint \limits_{0}^{t}}%
%BeginExpansion
{\displaystyle\int\limits_{0}^{t}}
%EndExpansion
\beta(s)I_j(s)ds)\cdot
%TCIMACRO{\dint \limits_{0}^{t}}%
%BeginExpansion
{\displaystyle\int\limits_{0}^{t}}
%EndExpansion
\exp(%
%TCIMACRO{\dint \limits_{0}^{r}}%
%BeginExpansion
{\displaystyle\int\limits_{0}^{r}}
%EndExpansion
\beta(s)I_j(s)ds) dr\\
&  -c_6\varepsilon \exp(-%
%TCIMACRO{\dint \limits_{0}^{t}}%
%BeginExpansion
{\displaystyle\int\limits_{0}^{t}}
%EndExpansion
\beta(s)I_j(s)ds)\cdot%
%TCIMACRO{\dint \limits_{0}^{t}}%
%BeginExpansion
{\displaystyle\int\limits_{0}^{t}}
%EndExpansion
\exp(%
%TCIMACRO{\dint \limits_{0}^{r}}%
%BeginExpansion
{\displaystyle\int\limits_{0}^{r}}
%EndExpansion
\beta(s)I_j(s)ds)\cdot\beta S dr\\
&<c_7\varepsilon 
\end{align*}
for $\theta\leq t\leq 2\theta$. 
\newline v)
\textbf{Convergence on }$[0,T]$\textbf{ of }$I$: 
 
In the above calculations, we have a constant multiplied with $\varepsilon$ in our inequalities, 
but, e.g., for $S$ above, we have that $c_2$ and $c_7$ does not depend on $j$ nor $\varepsilon$, therefore, 
we may pick a value  $\mathcal{N} > \max\{N_\varepsilon^1,N_\varepsilon^2\}$ 
such that  $$\vert S_{j}(t)-S(t)\vert<\varepsilon$$ for all $t\in[0,2\theta]$ as long as $j>\mathcal{N}$. An analogous argument shows the same inequality for $I$ that is, $$\vert I_j(t)-I(t) \vert <\varepsilon$$ on the same time interval.

We prove that $|I(t)-I_j(t)|<\varepsilon$ for all $0 \le t \le T$
by means of finite mathematical induction.  

Fix $m$ as the smallest integer such that $m\theta>T$.

Base case: Above, we have proved that $|I(t)-I_j(t)|<\varepsilon$ and $|S(t)-S_j(t)|<\varepsilon$ for all $0 \le t \le 2\theta$ if $j>\max\{N_\varepsilon^1,N_\varepsilon^2\}$.

Asumption: We assume that $|I(t)-I_j(t)|<\varepsilon$ and $|S(t)-S_j(t)|<\varepsilon$,  for all $0 \le t \le k\theta$, is true, for a fixed integer $1 \le k \le m-1$ if $j>\max\{N_\varepsilon^1,N_\varepsilon^2,\ldots,N_\varepsilon^k\}$.

Induction step: We will
now prove that $|I(t)-I_j(t)|<\varepsilon$ for all $0 \le t \le (k+1)\theta$ is true if $j>\max\{N_\varepsilon^1,N_\varepsilon^2,\ldots,N_\varepsilon^{k+1}\}$.

%\textcolor{red}{ Starta här!}
We will have two cases,  $\theta \le t \le L\le (k+1)\theta$ and $\theta\le t\le (k+1)\theta\le L$.

\textbf{Case}: $0 \le t \le L\le (k+1)\theta$.

By using $\vert I_{j}(t)-I(t)\vert<\varepsilon $ and $\vert S_{j}(t)-S(t)\vert<\varepsilon $ for $0\leq t \leq k\theta$,  
we can say that $\vert I_{j_2}(r-\tau)-I(r-\tau)\vert<\varepsilon $ and $\vert S_{j_2}(r-\tau)-S(r-\tau)\vert<\varepsilon $ since $0 \le r-\tau \le t-\tau\le (k+1)\theta-\theta=k\theta$ for all $0\le r\le t$.
Then,  
\begin{align}\label{eq:SIEq}
\vert \beta S_{j_1}I_{j_1}-\beta SI\vert&=\beta\vert S_{j_1}I_{j_1}-SI\vert\nonumber\\
&=\beta\vert S_{j_1}I_{j_1}-S_{j_1}I+S_{j_1}I-SI\vert\nonumber\\
&\leq \beta(\vert S_{j_1}I_{j_1}-S_{j_1}I\vert+\vert S_{j_1}I-SI\vert)\nonumber\\
&= \beta (S_{j_1}\vert I_{j_1}-I\vert+I \vert S_{j_1}-S\vert)\nonumber\\
&\leq \beta \varepsilon (S_{j_1}+ I)
\end{align}
Since $S_{j_1}$ and $I$ are bounded functions, we have
$$\vert \beta S_{j_1}I_{j_1}-\beta SI\vert<c\varepsilon$$

As a consequence, we may use Equation \eqref{eq:ApproForLarge} in Lemma \ref{lem:approach}, to show that 
${\vert D_j(r,\tau)\vert <\varepsilon}$ in Equation \eqref{eq:converg_I}.

For $0\le t\le L\le (k+1)\theta$ we get 
\begin{equation*}
|I(t)-I_j(t)|<\varepsilon e^{-t\left(  \gamma+\mu\right)  }\int_{0}^{t}e^{r\left(
\gamma+\mu\right)  }dr=\frac{\varepsilon}{r+\mu}\left( 1-e^{-t(r+\mu)}\right)=c_8\varepsilon.
\end{equation*}

\textbf{Case}: $0\le t\le (k+1)\theta\le L$.

Again, Equation \eqref{eq:converg_I} provides 
\begin{align*}
&D_{j}(r,\tau)\\
&=\sum_{i=1}^{j}\omega_{i}^{j}\beta(r-\tau_{i}^{j}%
)I_{j}(r-\tau_{i}^{j})S_{j}(r-\tau_{i}^{j})-\int_{\theta}^L\beta
(r-\tau)I(r-\tau)S(r-\tau)\Psi(\tau)d\tau\\
&=\underbrace{\sum_{i=1}^{j_2}\omega_{i}^{j}\beta(r-\tau_{i}^{j}%
)I_{j}(r-\tau_{i}^{j})S_{j}(r-\tau_{i}^{j})-\int_{\theta}^{(k+1)\theta}\beta
(r-\tau)I(r-\tau)S(r-\tau)\Psi(\tau)d\tau}_{g_3^j(r,\tau)}\\
&+\underbrace{\sum_{i=j_2+1}^{j}\omega_{i}^{j}\beta(r-\tau_{i}^{j}%
)I_{j}(r-\tau_{i}^{j})S_{j}(r-\tau_{i}^{j})-\int_{(k+1)\theta}^{L}\beta
(r-\tau)I(r-\tau)S(r-\tau)\Psi(\tau)d\tau}_{g_4^j(r,\tau)}
\end{align*}
In the above summation terms, $j_2$, is chosen such that,  
${d_{j_2}^{j}<(k+1)\theta}$ and 
${d_{j_2+1}^j\ge (k+1)\theta}$.

By using the Equation \eqref{eq:ApproForLarge} in Lemma \ref{lem:approach},  $\vert g_4^j(r,\tau)\vert<\varepsilon$ for $0\le r\le t$ and $(k+1)\theta\le \tau\le L$
since $S(r-\tau)$,  $S_j(r-\tau)$,  $I(r-\tau)$ and  $I_j(r-\tau)$ are the constants for $r-\tau\leq 0$.
% if $j>N_\varepsilon^{k+1}$.

Then according to the Equation \eqref{eq:ApproForLarge} in Lemma \ref{lem:approach} we have $\vert g_3^j(r,\tau)\vert<\varepsilon$ 
if ${j>\max\{N_\varepsilon^1,N_\varepsilon^2,\cdots,N_\varepsilon^k\}}$ since the Equation \eqref{eq:SIEq} is valid by using the assumption step.
Thus, it follows that $\vert D_{j}(r,\tau)\vert<c_8\varepsilon$ for $0\le r\le t$ for all 
${j>\max\{N_\varepsilon^1,N_\varepsilon^2,\cdots,N_\varepsilon^{k+1}\}}$.
As a consequence, we get from Equation \eqref{eq:converg_I}
\begin{align*}
|I_{j}(t)-I(t)| &  <e^{-t\left(  \gamma+\mu\right)  }\int_{0}^{t}e^{r\left(
\gamma+\mu\right)  }\varepsilon dr\\
&  =\frac{c_5\varepsilon}{(\gamma+\mu)}e^{-t\left(  \gamma+\mu\right)  }\left(
e^{t\left(  \gamma+\mu\right)  }-1\right)  \\
&  <c_9\varepsilon
\end{align*}
for $0\le t \le (k+1)\theta$.
By Mathematical induction,  we get that $|I_{j}(t)-I(t)| <\varepsilon$ for $0\le t \le T$ as long as $j>\max\{N_\varepsilon^1,N_\varepsilon^2,\cdots,N_\varepsilon^m\}$.
% where $m\theta>T$.
\newline vi)
\textbf{Convergence on }$[0,T]$ \textbf{ of } $S$:

By analoguos arguments as iv), one can prove the case of convergence on $[0,T]$ of $S$ by using the convergence of $I$ on $[0,T]$.
\newline vii) 
\textbf{Final note}:

The reason why we may exclude the constants in the equality above stems from the same arguments as in the beginning of this proof, and since this is repeated a finite amount of times, there will be a a finite constant $\mathcal{N}$ such that inequalities
$$\vert S_{j}(t)-S(t)\vert<\varepsilon,\quad \vert I_j(t)-I(t) \vert <\varepsilon$$ for all $t\in[0,T]$ as long as $j>\mathcal{N}$.
\end{proof}

\section{Endemic model with generalized measure approach}\label{Sec:GeneralMeasureKernel}

In the continuous and discrete endemic models from Section~\ref{sec:modelWithDirac}, we assumed that the probability density functions $\Phi$ and $\Psi$ have compact support in $L^{1}(\mathbb{R})$, using both Lebesgue and Dirac measures. 
From here on, we extend 
System~\eqref{model:mainSystemOfEquation} and System~\eqref{model:discrete} 
to include kernal functions with unbounded support, that is, 
\begin{align}\label{eq:FactsWRadon}
\operatorname*{supp}(\Phi)&\subset [\sigma,\infty), \textrm{where } \sigma>0, \nonumber\\ 
\operatorname*{supp}(\Psi)&\subset [\theta,\infty), \textrm{where } \theta > 0,\\ 
\int_{\mathbb{R_{+}}} \Phi(\rho) d\lambda(\rho)& = 1=\int_{\mathbb{R_{+}}} \Psi(\tau) d\lambda(\tau)\nonumber. 
\end{align}
Here the non-negative measures ${d\mu_1 = \Psi d\lambda}$ and ${d\mu_2 = \Phi d\lambda}$ are viewed as Radon measures. 
Specifically, for any $\varepsilon > 0$, there exists $M$ such that ${\int_{M}^{\infty} \Phi(\rho)  d\lambda(\rho) < \varepsilon}$.

\subsection{Convergence analysis of endemic model with generalized measure}\label{sec:convergenceGenMea}
In this section,  we analyze the convergence of the endemic model using a generalized Radon measure, rather than relying on the assumption of compact support with a Dirac measure. The important reason for working with positive Radon measures is that their behaviour at infinity is tightly controlled, that is for each $\epsilon>0$, there exists a compact set $K_\epsilon$ such that $\mu(\mathbb{R_{+}}\setminus K_\epsilon)<\epsilon$.

The endemic discrete time model is defined as System \eqref{model:discrete} and the history data are assumed as mentioned in Section \ref{sec:convergence}.

\begin{lemma} \label{lem:approachRadon}
Let $H:\mathbb{R_+}\rightarrow \mathbb{R_+}$ be continuous and bounded function such that $H d\lambda$ becomes a positive finite Radon measure, then%
\[
\int_{\mathbb{R_{+}}}H(\rho)d\nu_{j}(\rho)=\sum_{i=1}^{j}H(\rho_{i}^{j})\omega_{i}^{j}\underset{j\rightarrow\infty
}{\longrightarrow}\int_{\mathbb{R_{+}}}H(\rho)\Phi(\rho)d\lambda(\rho)
\]
where $\lambda$ is the Lebesgue measure.
\end{lemma}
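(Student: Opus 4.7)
The plan is to reduce the Radon--measure setting to the compact--support setting of Lemma~\ref{lem:approach} by truncating at a sufficiently large radius. The essential new ingredient is the tightness of $\Phi \, d\lambda$ recorded in~\eqref{eq:FactsWRadon}: for every $\varepsilon>0$ there exists $M_\varepsilon>0$ with $\int_{M_\varepsilon}^{\infty}\Phi(\rho)\,d\lambda(\rho)<\varepsilon$. Boundedness of $H$ (so that $\|H\|_\infty<\infty$) will be what turns this tail-smallness into smallness of the tail integrals we need to estimate.

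First I would fix $\varepsilon>0$, select such an $M_\varepsilon$, and adapt the discretization~\eqref{eq:division}--\eqref{eq:weighted} so that, for each $j$, the grid $\{d_i^j\}$ covers an interval $[\sigma,M_j]$ with $M_j\to\infty$ and with mesh size tending to zero, the weights $\omega_i^j$ being defined as in~\eqref{eq:weighted}. For every $j$ large enough that $M_j>M_\varepsilon$, I would split both the integral and the discrete sum at $M_\varepsilon$, writing
\[
\int_{\mathbb{R_{+}}} H\Phi\,d\lambda = \int_{\sigma}^{M_\varepsilon} H\Phi\,d\lambda + \int_{M_\varepsilon}^{\infty} H\Phi\,d\lambda, \qquad \sum_{i=1}^{j} H(\rho_i^j)\omega_i^j = \sum_{\rho_i^j\le M_\varepsilon} + \sum_{\rho_i^j > M_\varepsilon}.
\]

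Next, on the compact interval $[\sigma,M_\varepsilon]$ the function $H$ is uniformly continuous by Heine--Cantor, so the argument of Lemma~\ref{lem:approach} applies verbatim and gives $\bigl|\sum_{\rho_i^j\le M_\varepsilon}H(\rho_i^j)\omega_i^j-\int_\sigma^{M_\varepsilon}H\Phi\,d\lambda\bigr|<\varepsilon$ for $j$ sufficiently large. For the tail contributions, boundedness of $H$ gives $\int_{M_\varepsilon}^{\infty}H\Phi\,d\lambda\le \|H\|_\infty\,\varepsilon$, while the discrete tail is controlled by $\sum_{\rho_i^j>M_\varepsilon}H(\rho_i^j)\omega_i^j\le \|H\|_\infty\sum_{\rho_i^j>M_\varepsilon}\omega_i^j\le \|H\|_\infty\int_{M_\varepsilon}^{\infty}\Phi\,d\lambda<\|H\|_\infty\,\varepsilon$, the penultimate inequality following because the weights~\eqref{eq:weighted} are just Lebesgue--$\Phi$ masses of disjoint subintervals of $[M_\varepsilon,M_j]$. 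Combining these three bounds by the triangle inequality produces $(1+2\|H\|_\infty)\varepsilon$, and since $\varepsilon>0$ was arbitrary the claim follows.

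The main obstacle is the set-up rather than any delicate analysis: one must justify that the adapted discretization can simultaneously make the mesh on $[\sigma,M_\varepsilon]$ vanish as $j\to\infty$ and push the outer endpoint $M_j$ past $M_\varepsilon$, while keeping $\sum_{\rho_i^j>M_\varepsilon}\omega_i^j$ comparable to the Radon tail $\int_{M_\varepsilon}^{\infty}\Phi\,d\lambda$. Once this is in place, the body of the proof is a clean two-scale decomposition: a Riemann-type approximation on the compact piece (inherited from Lemma~\ref{lem:approach}) plus a uniform $\|H\|_\infty\varepsilon$ tail correction coming from the tightness of the Radon measure $\Phi\,d\lambda$.
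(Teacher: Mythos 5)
Your proposal is correct and follows essentially the same route as the paper: truncate at an $\varepsilon$-dependent radius $M_\varepsilon$ using the tightness of $\Phi\,d\lambda$ together with the boundedness of $H$, apply Lemma~\ref{lem:approach} on the compact piece $[\sigma,M_\varepsilon]$, and absorb the tail via the triangle inequality. The only real difference is that the paper ties the grid~\eqref{eq:division} to the truncation point $M_\varepsilon$ itself, so its discrete sum has no mass beyond $M_\varepsilon$ and only the continuous tail $\int_{M_\varepsilon}^{\infty}H\Phi\,d\lambda<\varepsilon/2$ needs estimating, whereas you let the grid extend to $M_j\to\infty$ independently of $\varepsilon$ and bound the extra discrete tail by $\|H\|_\infty\int_{M_\varepsilon}^{\infty}\Phi\,d\lambda$ --- a variant that is, if anything, cleaner, since it keeps the approximating measures $\nu_j$ independent of $\varepsilon$.
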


\begin{proof}
Fix any given $\varepsilon>0$. Using the boundedness of $H$ and the tightness of the Radon measures, we get an $M>0$ such that
\begin{equation}\label{eq:RadonControl}
\int_{M}^\infty H(\rho)\Phi d\lambda<\varepsilon/2
\end{equation}
With this $M$ we use Equation~\eqref{eq:division} to define the interval end points $d_i^j$. Now we employ Lemma 1 on the interval $[\sigma,M]$ and use Equation~\eqref{eq:epsilonLemma} (which is true, even if $\int_{\sigma}^M \Phi<1$) to get
\begin{equation*}
\left\vert \int_{\sigma}^{M}H(\rho)\Phi(\rho)d\lambda(\rho)
- \sum_{i=1}^{j}H(\rho_{i}^{j})\omega_{i}^{j}\right\vert \le \varepsilon/2
\end{equation*}
We now have
\begin{align*}
\int_{\mathbb{R_{+}}}&H(\rho)\Phi(\rho)d\lambda(\rho)-\sum_{i=1}^{j}H(\rho_{i}^{j})\omega_{i}^{j}\\
&=\int_{\sigma}^M H(\rho)\Phi(\rho)d\lambda(\rho)+
\left(\int_{M}^{\infty} H(\rho)\Phi(\rho)d\lambda(\rho)-\sum_{i=1}^{j}H(\rho_{i}^{j})\omega_{i}^{j}\right)
\end{align*}

Applying the modulus function on both sides and using the triangle inequality ends the proof.
\end{proof}

\begin{theorem}
Let $\{S,L,I,R^{\text{T}},R^{\text{P}}\}$ be the solution of 
System~\eqref{model:mainSystemOfEquation} with the support of complementary Equation \eqref{eq:FactsWRadon}.
The contact rate is assumed to be a
non-negative, smooth function and the remaining parameters are assumed to be
non-negative constants. Then, for any $T>0$, the solution of System~\eqref{model:discrete} with
$\{\omega_{i}^{j}\},\{\rho_{i}^{j}\},\{\varpi_{i}^{j}\},$ and $\{\tau_{i}%
^{j}\}$ converges in the supremum norm on $[0,T]$ to the solution of System~\eqref{model:mainSystemOfEquation}
as $j$ tends to infinity.
\end{theorem}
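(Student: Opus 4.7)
The plan is to follow the architecture of the proof of Theorem~\ref{the:convergenceDirac} verbatim, with Lemma~\ref{lem:approachRadon} substituted for Lemma~\ref{lem:approach} at every point where we need to pass from a discrete weighted sum to a continuous integral against $\Phi d\lambda$ or $\Psi d\lambda$. The Duhamel-type representations
\begin{equation*}
(I_j - I)(t) = e^{-t(\gamma+\mu)} \int_0^t D_j(r,\tau)\, e^{r(\gamma+\mu)}\, dr
\end{equation*}
and the analogous formula \eqref{eq:converg_S} for $S_j - S$ are obtained by purely algebraic manipulation of Systems~\eqref{model:mainSystemOfEquation} and \eqref{model:discrete}, hence remain valid whether $\operatorname{supp}(\Psi)$ is compact or not. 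The finite induction on the intervals $[k\theta,(k+1)\theta]$ for $k=0,1,\dots,m-1$, where $m$ is the smallest integer with $m\theta > T$, will again drive the argument, and the remaining compartments $L,R^{\text{T}},R^{\text{P}},D$ will follow by direct integration of the already-converged quantities.

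The essential new step concerns the kernel-discrepancy term
\begin{equation*}
D_j(r,\tau) = \sum_{i=1}^{j}\omega_i^j\,\beta(r-\tau_i^j) I_j(r-\tau_i^j) S_j(r-\tau_i^j) - \int_\theta^\infty \beta(r-\tau) I(r-\tau) S(r-\tau)\Psi(\tau)\, d\lambda(\tau),
\end{equation*}
which, unlike in the compact case, has an unbounded continuous tail. Given $\varepsilon > 0$, I first invoke the tightness described in Section~\ref{Sec:GeneralMeasureKernel} to pick $M = M(\varepsilon)$ so large that $\int_M^\infty \Psi\, d\lambda < \varepsilon$ and $M > (k+1)\theta$; since $\beta S I$ is bounded on $[-M,T]$ by the history data \eqref{eq:HS}--\eqref{eq:HI} and the boundedness of the solution, the tail contribution to the continuous integral is $O(\varepsilon)$. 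I then define the partition points $d_i^j$ via \eqref{eq:division} on $[\sigma, M]$ (with the analogue on $[\theta,M]$ for $\Psi$) and split $D_j$ in the same three-piece style as in the induction step of Theorem~\ref{the:convergenceDirac}: a piece on $[\theta,(k+1)\theta]$ where the inductive hypothesis $\|(I_j,S_j) - (I,S)\|_\infty < \varepsilon$ on $[0,k\theta]$ controls the integrand pointwise (yielding the bound \eqref{eq:SIEq}); a piece on $[(k+1)\theta,M]$ where the delayed arguments are non-positive so that $S_j,I_j$ coincide with $S,I$ on the history and Lemma~\ref{lem:approachRadon} applies directly; and the far tail $[M,\infty)$, which is controlled by the Radon-measure estimate together with the analogous bound on $\sum_{i}\omega_i^j$ for indices with $\rho_i^j > M$.

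The main obstacle is the interleaving of quantifiers: $M$ itself depends on $\varepsilon$, so the Heine--Cantor uniform continuity of $S,I,\beta$ must be applied on the $\varepsilon$-dependent compact set $[-M(\varepsilon),T]$, and only after $M(\varepsilon)$ is fixed may one choose $j$ large enough that the Riemann-type quadrature of Lemma~\ref{lem:approachRadon} on $[\sigma,M]$ is within $\varepsilon$. One must verify that the integrating factor $\exp(\int_0^t \beta(s) I_j(s)\, ds)$ remains uniformly bounded in $j$, which follows from the uniform a priori bound on $I_j$ proved in \cite{TL2024}. Once the order "fix $\varepsilon$, then $M$, then $j$" is respected, the induction on $k$ goes through and, after absorbing finitely many $k$-independent constants into a single threshold $\mathcal{N}$ exactly as in step (vii) of the proof of Theorem~\ref{the:convergenceDirac}, yields $\|I_j - I\|_\infty, \|S_j - S\|_\infty < \varepsilon$ on $[0,T]$ for all $j > \mathcal{N}$.
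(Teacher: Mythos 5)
Your proposal follows essentially the same route as the paper, whose proof of this theorem is literally a one-line reference to repeating the argument of Theorem~\ref{the:convergenceDirac} with Lemma~\ref{lem:approachRadon} in place of Lemma~\ref{lem:approach}. You in fact supply more detail than the paper does — in particular the correct quantifier ordering (fix $\varepsilon$, then the truncation level $M$, then $j$) and the explicit tail estimate via tightness — which is exactly what makes the ``analogous construction'' go through.
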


\begin{proof}
The proof can be demonstrated through an analougous construction as used in Theorem \ref{the:convergenceDirac} by using Lemma \ref{lem:approachRadon}.
\end{proof}

\section{Numerical example}\label{sec:numericalExample}

In this section, we introduce a discrete time delay endemic version of 
System~\eqref{model:mainSystemOfEquation} with the support of complementary Equation \eqref{eq:FactsWRadon}. 
An investigation of how the solution
of this discrete time delay model approximate the exact solution of 
System~\eqref{model:mainSystemOfEquation}.  
The exact solution and numerical approximation algorithms are described below. 
We set the disease death rate to $\mu=\gamma I_{\text{FR}} (1-I_{\text{FR}})^{-1}$,   
see \cite{disease2024} for details. 

In the following numerical approximations we simulate a disese with shortest latency time $\theta=5$ days, and shortest time of immunity $\sigma=10$ days, and the remaining parameters are presented in Table~\ref{tableVar}.

\begin{table}[h]
\centering%
\begin{tabular}
[c]{|llll|}\hline
Symbol & Value & Unit & Interpretation\\\hline
$\gamma$ & 0.1$(=1/\hat{\gamma})$ & $\text{day}^{-1}$ & recovery rate
(1/duration of sickness)\\\hline
$N$ & $10^7$ & $-$ & initial popultion size\\\hline
$\beta$ & 0.5/N & $\text{day}^{-1}$ & contact rate\\\hline
$I_{FR}$ & 0.425 & $\text{day}^{-1}$ & infection fatality risk\\\hline
$p$ & 0.9 & $-$ & proportional immunity parameter\\\hline
$\tau$ & 1-100 & $\text{day}^{-1}$ & latent time\\\hline
$\rho$ & 2-200 & $\text{day}^{-1}$ & duration of temporary immunity\\\hline
$\mu$ & 0.0739 & $\text{day}^{-1}$ & disease death rate\\\hline
\end{tabular}
\caption{Description of model parameters. }%
\label{tableVar}%
\end{table}
\newpage

\subsection{Discrete finite time-delay approximation using DDEs}\label{sec:discreteRadon}
In this section, we present the discrete time-lag algorithm used to approximate the solution of System~\eqref{model:mainSystemOfEquation}
with the support of complementary Equation \eqref{eq:FactsWRadon} using the DDE23 solver. Both continuous and discrete integral kernels are defined for this simulation. Specifically, we utilize the following exponential distribution functions:

\begin{equation}
\Phi(\rho)=\left\{
\begin{array}
[c]{cc}%
\lambda_1 e^{-\lambda_1 (\rho-\sigma)}& \rho \ge \sigma,\\
0 & \text{elsewhere.}%
\end{array}
\right.  \label{eq:Rho}%
\end{equation}
and
\begin{equation}
\Psi(\tau)=\left\{
\begin{array}
[c]{cc}%
\lambda_2 e^{-\lambda_2 (\tau-\theta)}& \tau\ge \theta,\\
0 & \text{elsewhere.}%
\end{array}
\right.  \label{eq:Tau}%
\end{equation}

Assigning the intitial population to 10 millions, wich is an upper bound for all comparments, we may use $H=10^9$ in 
Equation \eqref{eq:RadonControl}, to determine the value of $M$ where we have used $\sigma = 10$, $\theta = 5$, 
and to make sure that we have probability kernels, 
we use $\lambda_1 = 1/\sigma$, and $\lambda_2 = 1/\theta$. 
This results in $M = 86$ (see the detailed explanation in Appendix \ref{app:Calculation}). 
The value of $M$ allows us to disregard the interval from $M$ to infinity, 
as the solutions in this interval—using exponential distribution functions—become negligible. 
This makes it feasible to apply the discrete endemic model with the aforementioned kernel functions.

In the discrete endemic model described by System~\eqref{model:discrete}, 
the integration terms in System~\eqref{model:mainSystemOfEquation} are replaced by summation terms, 
requiring the division of the entire integration interval.
We begin by dividing these intervals as outlined in equations~\eqref{eq:division}--\eqref{eq:weighted}. 
Here, $N_{\tau}$ denotes the number of subintervals used to approximate the finite integrals with respect to $\tau$, 
while $N_{\rho}$ is defined analogously for integrals with respect to $\rho$. 
This approach, used for simulating the discrete lag endemic model,
is referred to as the \emph{discrete $(N_{\tau}, N_{\rho})$ model for multiple lags}, as described in Section 4.2 of \cite{TL2024}.

\subsection{Continuous time-delay approximation using ODEs}

In our endemic model described by System~\eqref{model:mainSystemOfEquation}, with with the support of complementary Equation \eqref{eq:FactsWRadon} and the simplified kernel functions defined by equations~\eqref{eq:Rho}--\eqref{eq:Tau}, 
we can explicitly integrate the generalized integrals. This allows us to solve the system on consecutive time intervals using ordinary differential equations (ODEs), rather than delay differential equations.

Below, we rewrite System~\eqref{model:mainSystemOfEquation} and apply the ODE solver \texttt{ODE45} to obtain the ``exact'' solution. To do so, we follow the algorithm presented in Section 4.3 of \cite{TL2024}, 
with the modification of incorporating the exponential distribution functions described earlier. 

Finally, we compare this solution with the numerical results of the discrete-lag endemic model presented in System~\eqref{model:discrete} and discussed in Section~\ref{sec:discreteRadon}.

We rewrite System~\eqref{model:mainSystemOfEquation} on the form
\begin{align}\label{model:mainSystemOfEquationExact}
\frac{dS}{dt} &  =-\beta(t)I(t)S(t)+p\gamma G(t),\nonumber\\
\frac{dL}{dt} &  =\beta(t)I(t)S(t)-H(t),\nonumber\\
\frac{dI}{dt} &  =H(t)-\gamma I(t)-\mu I(t),\\
\frac{dR^{\text{T}}}{dt} &  =p\gamma I(t)-p\gamma G(t),\nonumber\\
\frac{dR^{\text{P}}}{dt} &  =(1-p)\gamma I(t),\nonumber\\
\frac{dD}{dt} &  =\mu I(t),\nonumber
\end{align}
where we emphasize that this is an ordinary system of ODEs and the initial data is defined above in this section.  The functions $G(t)$ and $H(t)$ are described as
\begin{equation}\label{eq:G}
G(t)=\int_{\sigma}^\infty I(t-\rho)\Phi(\rho)d\lambda(\rho)
\end{equation}
and 
\begin{equation}\label{eq:H}
H(t)=\int_{\theta}^\infty \beta(t-\tau)I(t-\tau)S(t-\tau
)\Psi(\tau)d\lambda(\tau).
\end{equation}

By using the equations \eqref{eq:Rho} and \eqref{eq:Tau}, the equations \eqref{eq:G} and \eqref{eq:H} can be presented as follows:

If $t\ge \sigma$ we get
\begin{align*}
G(t)&=\int_{\sigma}^\infty I(t-\rho
)\Phi(\rho)d\lambda(\rho)\\
&= \int_{\sigma}^\infty I(t-\rho
)\Phi(\rho)d\lambda(\rho)\\
&= \int_{\sigma}^t I(t-\rho
)\Phi(\rho)d\lambda(\rho)+\int_{t}^\infty I(t-\rho
)\Phi(\rho)d\lambda(\rho)\\
&= \int_{\sigma}^t I(t-\rho
)\Phi(\rho)d\lambda(\rho)+c_I\int_{t}^\infty \Phi(\rho)d\lambda(\rho)
\end{align*}
and if $t<\sigma$ the initial condition is $G(t)=c_I$.
By using the Equation \eqref{eq:Rho},  we get
\begin{align*}
G(t)&=
\int_{\sigma}^t I(t-\rho
)\lambda_1 e^{-\lambda_1 (\rho-\sigma)}d\lambda(\rho)+c_I\int_{t}^\infty \lambda_1 e^{-\lambda_1 (\rho-\sigma)}d\lambda(\rho)\\
&=\int_{\sigma}^t I(t-\rho
)\lambda_1 e^{-\lambda_1 (\rho-\sigma)}d\lambda(\rho)+c_I e^{-\lambda_1(t-\sigma)}
\end{align*}
In a similar way,  if $t<\theta$ then $H(t)=\beta_0 c_I c_s$ otherwise ($t\ge \theta$ ) we can solve
\begin{align*}
H(t)&=\int_{\theta}^\infty \beta(t-\tau)I(t-\tau)S(t-\tau
)\Psi(\tau)d\lambda(\tau)\\
&=\int_{\theta}^t \beta(t-\tau)I(t-\tau)S(t-\tau
)\Psi(\tau)d\lambda(\tau)\\
&+\beta_0 c_I c_S\int_{t}^\infty \Psi(\tau)d\lambda(\tau)\\
\end{align*}
We evaluate the last term by the use of Equation \eqref{eq:Tau},  we get
\begin{align*}
H(t)&=\int_{\theta}^t \beta(t-\tau)I(t-\tau)S(t-\tau
)\lambda_2 e^{-\lambda_2 (\tau-\theta)} d\lambda(\tau)\\
&+\beta_0 c_I c_S e^{-\lambda_2(t-\theta)}
\end{align*}
In addition, to determine the initial conditions, i.e. equations \eqref{eq:Es}--\eqref{eq:RPs}, we use the kernal functions defined by equations \eqref{eq:Rho} and \eqref{eq:Tau}.

By substituting the Equation \eqref{eq:Tau} into Equation \eqref{eq:Es} and using integration by parts gives
\[
L(0)   =\beta_{0}c_{I}c_{S} \left(\theta+\frac{1}{\lambda_2}\right)
\]
Similar calulations gives
\begin{align*}
R^{\text{T}}(0)  &  =c_{I}p\gamma \left(\sigma+\frac{1}{\lambda_1}\right)
\text{and}\\
R^{\text{P}}(0)  &  =(1-p)\gamma c_{I} \left(\theta+\frac{1}{\lambda_2}\right)
\text{.}%
\end{align*}

The initial condition of $S(0)$, is calculated as
\begin{align*}
S(0)=N(0)-L(0)-I(0)-R^{\text{T}}(0)-R^{\text{P}}(0)
\end{align*}
By using the above initial conditions, we get
\begin{align*}
S(0)&=N(0)-\beta_{0}c_{I}S(0) \left(\theta+\frac{1}{\lambda_2}\right)-c_I\\
&\quad-c_{I}p\gamma \left(\sigma+\frac{1}{\lambda_1}\right)-(1-p)\gamma c_{I} \left(\theta+\frac{1}{\lambda_2}\right)
\end{align*}
Simplifying, we get
\begin{equation*}
S(0)=\frac{N(0)-c_I-c_{I}p\gamma \left(\sigma+\frac{1}{\lambda_1}\right)-(1-p)\gamma c_{I} \left(\theta+\frac{1}{\lambda_2}\right)}{\left(1+ \beta_{0}c_{I}\left(\theta+\frac{1}{\lambda_2}\right)\right)}
\end{equation*}
The algorithm from \cite{TL2024} is employed to obtain the ``exact solution'' using the ODE45 solver in MATLAB. In the next section, 
we use this exact solution as a benchmark to compare with the numerical solution of System \eqref{model:discrete}, 
which corresponds to the discrete $(N_{\tau}, N_{\rho})$ model described in Section \ref{sec:discreteRadon}.

\subsection{Simulation results}\label{sec:simulations}

In this section, we compare the exact solutions of System \eqref{model:mainSystemOfEquationExact} 
with the numerical solutions of System~\eqref{model:mainSystemOfEquation} to demonstrate that the numerical solutions converge to the exact solution as $(N_{\tau},N_{\rho})$ increases. 
The parameter values are provided in Table \ref{tableVar}, and the kernels are given by equations \eqref{eq:Rho} and \eqref{eq:Tau}. 
The initial population for all compartments is computed using the above equations, with $N(0)$ and $I(0)$ set to 10 million and 10, respectively.

In the following illustration, we simulate the discrete lag endemic model by setting $(N_{\tau},N_{\rho})$ to $(1,2)$, $(10,20)$, and $(100,200)$, 
showing that increasing $(N_{\tau},N_{\rho})$ leads to convergence towards the exact solution of System \eqref{model:mainSystemOfEquationExact}.

\begin{figure}[H]
\centering
\includegraphics[width=5.8cm, height=5cm]{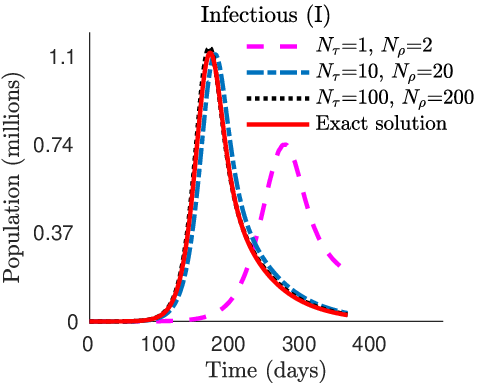}
\caption{The number of infectious individuals is presented 
under one year by setting different values of discrete ($N_{\tau}$,$N_{\rho}$) model and the continuous endemic model. 
The solid curve is shown by using the exact solution of System \eqref{model:mainSystemOfEquation} equipped with the kernal function equations~\eqref{eq:Rho}~and~\eqref{eq:Tau}. 
The rest of dotted curves are simulated utilizing discrete ($N_{\tau}$,$N_{\rho}$) model by using the values $(1,2)$,  $(10,20)$ and $(100,200)$.}\label{fig:infectious}
\end{figure}

It is evident from Figure~\ref{fig:infectious} that the solution of the discrete $(1,2)$ model deviates significantly from the exact solution curve,
whereas the other dotted curves are relatively close to the exact solution. 
We observe that the solutions of the discrete $(N_{\tau},N_{\rho})$ model converge toward the exact solution as $(N_{\tau},N_{\rho})$ increases, 
corroborating the analytical convergence results from Section~\ref{Sec:GeneralMeasureKernel}. 
In addition, the simulations of the number of individuals for other compartments are presented in the Appendix \ref{app:simulations}.
\begin{figure}[H]
\centering
\includegraphics[width=5.8cm, height=5cm]{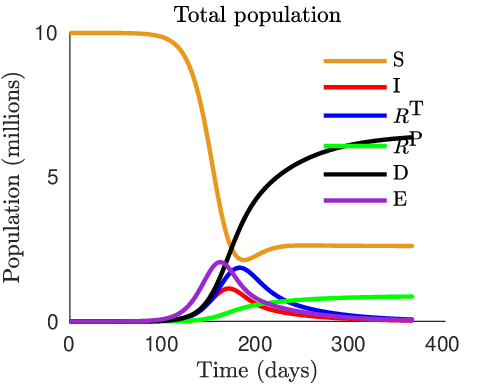}
\caption{The number of individuals for each compartment is presented 
under one year by setting $N_{\tau}=100$ and $N_{\rho}=200$ in the discrete ($N_{\tau}$,$N_{\rho}$) model, equipped with the kernal function equations~\eqref{eq:Rho}~and~\eqref{eq:Tau}. }\label{fig:allHundred}
\end{figure}

In Figure~\ref{fig:allHundred}, the total number of individuals in each compartment is shown as an approximation of the exact solution using the discrete $(100,200)$ model. The simulation time for this model is significantly faster compared to the continuous endemic model of System \ref{model:mainSystemOfEquationExact}. Specifically, the discrete model runs in approximately one minute, whereas the continuous model requires around four days of simulation time. Both simulations were run on a Windows laptop using MATLAB 2023b with the built-in functions ODE45 and DDE23.

\section{Conclusions and Remarks}
We propose a numerical method to find solutions to the $SLIR^\text{T}R^\text{P}D$ endemic model, which was introduced and analytically studied in \cite{TL2024}. In Section \ref{sec:modelWithDirac}, we establish the uniform convergence of the solutions obtained through this numerical method to the analytical solutions of the $SLIR^\text{T}R^\text{P}D$ model, as presented in this article in System \eqref{model:mainSystemOfEquation}. 

Moreover, we extend the $SLIR^\text{T}R^\text{P}D$ model by incorporating kernel functions with unbounded support and prove that the proposed numerical method converges uniformly on compact sets even in this more general setting. Our numerical approach approximates the Lebesgue/Radon-integral formulation by a finite sum of Dirac measures.

In the numerical simulation section, we present an example using exponential distribution functions as the probability density functions. 
This example illustrates the convergence of the numerical solutions of System \eqref{model:discrete} to the exact solution of 
System \eqref{model:mainSystemOfEquationExact}, 
with the corresponding conditions given by Equation~\eqref{eq:FactsWRadon}. 
The simulation results confirm that the discrete ($N_{\tau},N_{\rho}$) model of System \eqref{model:discrete} 
converges to the exact solution of System \eqref{model:mainSystemOfEquationExact} as $N_{\tau}$ and $N_{\rho}$ increase. 
Furthermore, it was observed that the computation time needed to find the exact solution is significantly longer—by several orders of magnitude—than the simulation time for the numerical approximation.

Another challenge arises when general probability kernel functions are employed, as integral in Inequality \eqref{eq:RadonControl} does not admit explicit solutions, in contrast to the case with the exponential distribution function.

As a direction for future work, one could explore the inclusion of general finite Radon measures as probability distributions.

%Konvergensbevis av att System (1) bestående av tidsförskjutningar med kontinuerliga integralkärnor, kan approximeras med System (2) bestående av summor av tidsförskjutningar.
% 
%Simulation discussion:
%aproximationen löses snabbt, men vi har två problem med den exakta lösningen;
%1) om vi har mer allmän sannolikhetsfunktion eller icke-konstant historik, så kan vi inte integrera den från M till $\infty$.
%2) Vår naiva lösningsfunktion tog väldigt lång tid (4 dagar mot någon minut för approximationsmodellen)
%
%Future research, connection to other authors works?
%Konvergensbevis av att System (1) (kontinuerlig Radon-tidssjkutningsmodell) kan approximeras med System (2) (diskret tidsförskjutnings-modell, dirac-mått). 

\bibliographystyle{plain}
\bibliography{mybib}

\section*{Appendix}

\subsection*{Calculating $M$ in Section \ref{sec:discreteRadon}}\label{app:Calculation}
To approximate the value of $M$, we investigate the Equation \eqref{eq:RadonControl} by using the exponential distribution \eqref{eq:Rho}, we get
\begin{align*}
\int_M^\infty H\Phi(\rho)d\rho&=\int_M^\infty H\lambda_1 e^{-\lambda_1 (\rho-\sigma)}d\rho\\
&=He^{-\lambda_1(M-\sigma)}
\end{align*}
According to the Equation \eqref{eq:RadonControl},  the bound, given in Section \ref{app:Calculation}, implies that
\begin{equation*}
He^{-\lambda_1(M-\sigma)}\le 1
\end{equation*}
Thus,
\begin{equation*}
M\le \sigma+\frac{1}{\lambda_1}\ln(H)
\end{equation*}
In the simulation, we set $M=\sigma+\frac{1}{\lambda_1}\ln(H)$.
\subsection*{Numerical simulations}\label{app:simulations}
In this section, we present the rest of the simulation results of the discrete $(N_{\tau}, N_{\rho})$ model, 
presented in Section \ref{sec:simulations}, to demonstrate the convergence of the numerical solutions toward the exact solution of the continuous endemic model.

\begin{figure}[H]
\centering
\includegraphics[width=7.3cm, height=5cm]{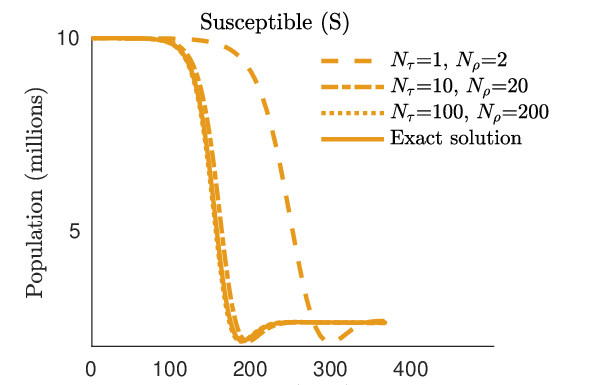}
\caption{The number of susceptible individuals is presented 
under 1 year by setting different values of discrete ($N_{\tau}$,$N_{\rho}$) model and the continuous endemic model. The solid curve is shown by using the exact solution of System \eqref{model:mainSystemOfEquation}. The rest of dotted curves are simulated utilizing discrete ($N_{\tau}$,$N_{\rho}$) model by setting (1,2),  (10,20) and (100,200).}\label{fig:susceptible}
\end{figure}

\begin{figure}[H]
\centering
\includegraphics[width=7.3cm, height=5cm]{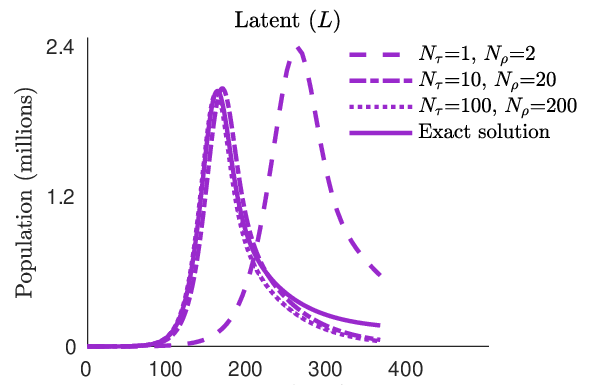}
\caption{The number of latent individuals is presented 
under 1 year by setting different values of discrete ($N_{\tau}$,$N_{\rho}$) model and the continuous endemic model. The solid curve is shown by using the exact solution of System \eqref{model:mainSystemOfEquation}. The rest of dotted curves are simulated utilizing discrete ($N_{\tau}$,$N_{\rho}$) model by setting (1,2),  (10,20) and (100,200).}\label{fig:L}
\end{figure}

\begin{figure}[H]
\centering
\includegraphics[width=7.3cm, height=5cm]{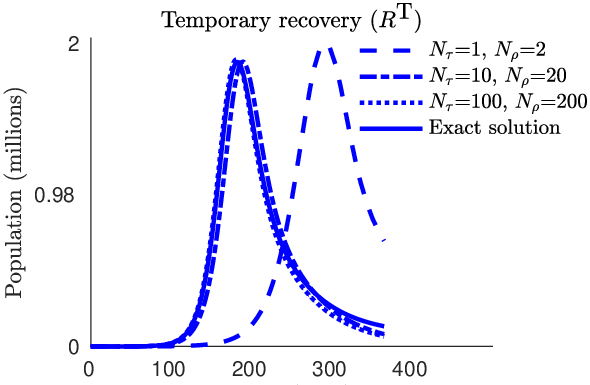}
\caption{The number of temporary recovery individuals is presented 
under 1 year by setting different values of discrete ($N_{\tau}$,$N_{\rho}$) model and the continuous endemic model. The solid curve is shown by using the exact solution of System \eqref{model:mainSystemOfEquation}. The rest of dotted curves are simulated utilizing discrete ($N_{\tau}$,$N_{\rho}$) model by setting (1,2),  (10,20) and (100,200).}\label{fig:temRecovery}
\end{figure}

\begin{figure}[H]
\centering
\includegraphics[width=7.3cm, height=5cm]{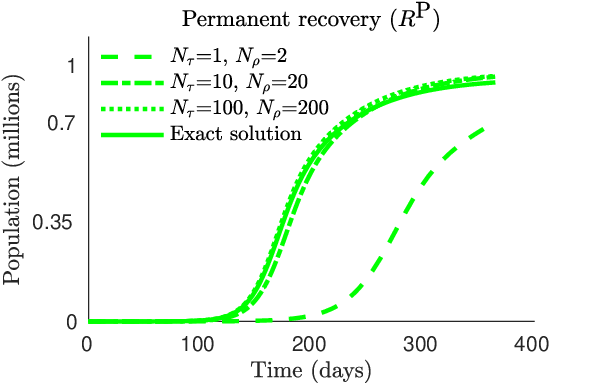}
\caption{The number of permanent recovery individuals is presented 
under 1 year by setting different values of discrete ($N_{\tau}$,$N_{\rho}$) model and the continuous endemic model. The solid curve is shown by using the exact solution of System \eqref{model:mainSystemOfEquation}. The rest of dotted curves are simulated utilizing discrete ($N_{\tau}$,$N_{\rho}$) model by setting (1,2),  (10,20) and (100,200).}\label{fig:perRecovery}
\end{figure}

\begin{figure}[H]
\centering
\includegraphics[width=7.3cm, height=5cm]{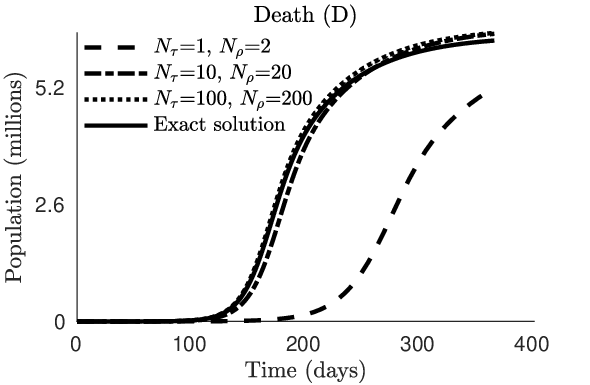}
\caption{The number of dead individuals is presented 
under 1 year by setting different values of discrete ($N_{\tau}$,$N_{\rho}$) model and the continuous endemic model. The solid curve is shown by using the exact solution of System \eqref{model:mainSystemOfEquation}. The rest of dotted curves are simulated utilizing discrete ($N_{\tau}$,$N_{\rho}$) model by setting (1,2),  (10,20) and (100,200).}\label{fig:dead}
\end{figure}

\end{document}